\newtheorem{thm}{Theorem}
\newtheorem{lem}{Lemma}
\newtheorem{remk}{Remark}
\begin{document}

\title{Revisiting Atomic Norm Minimization: A Sequential Approach for Atom Identification and Refinement}

\author{Xiaozhi Liu, Jinjiang Wei, Yong Xia
	\thanks{Xiaozhi Liu and Jinjiang Wei contributed equally to this work.}
	\thanks{{Xiaozhi Liu, Jinjiang Wei and Yong Xia are with the School of Mathematical Sciences, Beihang University, Beijing 100191, China (xzliu@buaa.edu.cn; weijinjiang1999@163.com; yxia@buaa.edu.cn).}}
	\thanks{This work was supported by the National Key R\&D Program of China (Grant No. 2021YFA1003300).}
}

\maketitle

\begin{abstract}

Atomic norm minimization (ANM) is a key approach for line spectral estimation (LSE). Most related algorithms formulate ANM as a semidefinite programming (SDP), which incurs high computational cost. In this letter, we revisit the ANM problem and present a novel limit-based formulation, which dissects the essential components of the semidefinite characterization of ANM. Our new formulation does not depend on SDP and can be extended to handle more general atomic sets beyond mixture of complex sinusoids. Furthermore, we reveal the connection between ANM and Bayesian LSE approaches, bridging the gap between these two methodologies. Based on this new formulation, we propose a low-complexity algorithm called Sequential Atom Identification and Refinement (SAIR) for ANM. Simulation results demonstrate that SAIR achieves superior estimation accuracy and computational efficiency compared to other state-of-the-art methods.

\end{abstract}

\begin{IEEEkeywords}
Line spectral estimation, atomic norm minimization (ANM), off-grid method, computational efficiency.
\end{IEEEkeywords}

\IEEEpeerreviewmaketitle

\section{Introduction} \label{sec:intro}

\IEEEPARstart{L}{ine} spectral estimation (LSE) is a fundamental technique in signal processing, with broad applications in direction-of-arrival (DOA) estimation \cite{l1-svd}, wireless communications \cite{ce_app}, and modal analysis \cite{modal}.
Given its importance across these domains, extensive research has focused on improving LSE's efficiency and accuracy.

Classical subspace methods, such as multiple signal classification (MUSIC) \cite{music},
address the LSE problem by exploiting the low-rank structure of the correlation matrix. 
However, these require a sufficient number of snapshots to accurately estimate the correlation matrix and assume the availability of complete data, which significantly limit their applicability in real-world scenarios.
With the advancement of compressive sensing (CS) \cite{cs}, many on-grid methods \cite{l1-svd} have been developed, which assume the true frequencies lie on a fixed sampling grid. However, LSE is a continuous parameter estimation problem, which inevitably leads to grid mismatch \cite{chi-mismatch}.

To tackle the issue of grid mismatch, several gridless methods have been proposed, including atomic norm minimization (ANM) approach \cite{anm} and enhanced matrix completion (EMaC) method \cite{emac}. 
ANM, in particular, solves the LSE problem using a continuous dictionary and provides strong theoretical guarantees in noiseless scenarios \cite{anm}.
However, these methods typically rely on solving semidefinite programming (SDP), resulting in high computational complexity.

Another class of methods employs an off-grid strategy to solve the LSE problem.
\cite{srcs} introduced an iterative reweighted approach based on the majorization-minimization (MM) algorithm, called Super-Resolution Compressed Sensing (SRCS).
\cite{nomp} extended the classical Orthogonal Matching Pursuit (OMP) to continuous parameter estimation using Newton refinements, resulting in Newtonized OMP (NOMP) algorithm. 
Furthermore, several Bayesian methods have been developed for solving the LSE problem \cite{sbl-de,vbi-17,superfast-lse}, utilizing Bayesian inference to estimate parameters with sparse priors.

This letter introduces a novel limit-based formulation of ANM which entirely bypasses the SDP implementation and can be extended to general atomic sets.
Furthermore, we reveal the connection between ANM and Bayesian approaches, offering new insights into the relationship between these two methodologies.
Based on this new formulation, we propose an efficient algorithm for ANM, called Sequential Atom Identification and Refinement (SAIR). Our simulations confirm that SAIR notably surpasses existing algorithms in both estimation accuracy and computational efficiency.

\textit{Notations}:
$\boldsymbol{A}$ is a matrix, $\boldsymbol{a}$ a vector, and $a$ a scalar.
$\boldsymbol{A}^T$ and $\boldsymbol{A}^H$ are the transpose and conjugate transpose of $\boldsymbol{A}$.
$\operatorname{rank}(\boldsymbol{A})$, $\operatorname{tr}(\boldsymbol{A})$, $\boldsymbol{A}^{-1}$, and $\boldsymbol{A}^{\dagger}$ represent the rank, trace, inverse, and pseudo-inverse of $\boldsymbol{A}$, respectively.
$\mathcal{R}(\boldsymbol{A})$ denotes the column space of $\boldsymbol{A}$, and $\boldsymbol{A} \succeq 0$ indicates that $\boldsymbol{A}$ is positive semidefinite (PSD).
$\|\boldsymbol{a}\|$ is the $\ell_2$-norm of $\boldsymbol{a}$.
$\operatorname{diag}(\boldsymbol{a})$ denotes a diagonal matrix with the elements of $\boldsymbol{a}$ on its diagonal.

\section{Preliminaries} \label{sec:pre}

Consider a LSE problem where the signal of interest $\boldsymbol{x} \in \mathbb{C}^n$ is composed of $K$ sinusoids:
\begin{equation}
	\boldsymbol{x} = \sum_{k=1}^{K} s_k \boldsymbol{a}(f_k),
\end{equation}
where $s_k \in \mathbb{C}$ represents the complex gain, and $f_k \in [0,1)$ denotes the frequency of the $k$-th component.
The atom $\boldsymbol{a}(f)$ follows a Vandermonde structure, given by:
\begin{equation}
	\boldsymbol{a}\left(f\right) = \left[1\ e^{-2 \pi i f}\ e^{-2 \pi i 2 f} \cdots e^{-2 \pi i(n-1) f} \right]^T.
	\label{eq:dft_basis}
\end{equation}
In the observation model, the observation vector $\boldsymbol{y} \in \mathbb{C}^m$ is defined as:
\begin{equation}
	\boldsymbol{y} = \boldsymbol{\Phi} \boldsymbol{x} + \boldsymbol{n},
	\label{eq:lse}
\end{equation}
where $\boldsymbol{\Phi} \in \mathbb{C}^{m\times n}$ (with $m \leq n$) is the measurement matrix\footnote{The matrix $\boldsymbol{\Phi}$ can either be the identity matrix (for the complete data case, $m=n$) or a random subsampling matrix formed by selecting $m$ random rows from the identity matrix (for the incomplete data case, $m < n$).}.
Additionally, $\boldsymbol{n}$ represents a white Gaussian noise vector with component variance $\sigma^2$.

The goal of LSE is to recover frequencies $\{f_k\}_{k=1}^K$ and gains $\{s_k\}_{k=1}^K$ from the measurement $\boldsymbol{y}$, without prior knowledge of the number of components $K$.

The LSE problem can be viewed as a continuous-domain parameter estimation task.
Due to its inherent sparsity, ANM \cite{anm} provides an effective solution.
Consider the atomic set $\mathcal{A}:=\left\{\boldsymbol{a}_k\right\}$, where each atom $\boldsymbol{a}_k \in \mathbb{C}^n$.
The atomic norm of $\boldsymbol{x}$ is defined as \cite{2012convex}:
\begin{equation}
	\|\boldsymbol{x}\|_{\mathcal{A}}=\inf _{\substack{\boldsymbol{a}_k, \\s_k \in \mathbb{C}, {r} }}\left\{\sum_{k=1}^{{r}}\left|s_k \right|: \boldsymbol{x}=\sum_{k=1}^{{r}} s_k \boldsymbol{a}_k, \boldsymbol{a}_k \in \mathcal{A} \right\} .
	\label{eq:an_general}
\end{equation}
In our problem, the atoms follow a specific structure: $\mathcal{A}:=\left\{\boldsymbol{a}\left(f_k\right), f_k \in[0,1)\right\}$, where $\boldsymbol{a}\left(f\right)$ is defined in \eqref{eq:dft_basis}.
In this case, the atomic norm can be characterized using an SDP \cite{anm}:
\begin{equation}
	\|\boldsymbol{x}\|_{\mathcal{A}}=\inf _{\boldsymbol{u}, t}\left\{\frac{1}{2 n} \operatorname{tr}(\mathcal{T}(\boldsymbol{u}))+\frac{1}{2} t\right\} \text {, s.t. }\left(\begin{array}{cc}
		\mathcal{T}(\boldsymbol{u}) & {\boldsymbol{x}} \\
		\boldsymbol{x}^H & t
	\end{array}\right) \succeq 0,
	\label{eq:an_sdp}
\end{equation}
where $\mathcal{T}(\boldsymbol{u})$ is a Toeplitz matrix with $\boldsymbol{u}$ as its first column.

Although ANM achieves precise frequency localization under specific minimum separation conditions \cite{anm}, its reliance on SDP results in high computational complexity, limiting scalability in large-scale applications.

Furthermore, in noiseless compressive measurement scenarios, if the signal $\boldsymbol{x}$ has an atomic representation $\boldsymbol{x} = \sum_{j=1}^{r} \boldsymbol{a}(f_j)$, then $\boldsymbol{\Phi} \boldsymbol{x} = \sum_{j=1}^{r} \boldsymbol{\Phi} \boldsymbol{a}(f_j)$.
This indicates that, within the truncated atomic set $\mathcal{A} := \left\{\boldsymbol{\Phi} \boldsymbol{a}\left(f_k\right), f_k \in[0,1)\right\}$, the sparsity of $\boldsymbol{x}$ is preserved in $\boldsymbol{\Phi} \boldsymbol{x}$.
In other words, we can recover the sparse atomic representation of $\boldsymbol{x}$ by directly solving the truncated atomic norm $\|\boldsymbol{\Phi} \boldsymbol{x}\|_{\mathcal{A}}$.

While no equivalent SDP formulation exists for solving the truncated atomic norm, we address this by developing a novel formulation for the atomic norm over a general atomic set, eliminating the need for an SDP.
This enables us to propose a low-complexity algorithm that efficiently recovers frequencies.

\section{Equivalent Reformulation of Atomic Norm} \label{sec:equivalent}

Before presenting the new formulation of ANM, we first review some fundamental results that underpin the SDP characterization.
\begin{lem}[Schur Complement] \label{lem:schur}
	Let $\boldsymbol{M}=\left(\begin{array}{cc}\boldsymbol{A} & \boldsymbol{B} \\ \boldsymbol{B}^{H} & \boldsymbol{D}\end{array}\right)$ be a Hermitian matrix. The following conditions are equivalent:
	\begin{itemize}
		\item[(1)] $\boldsymbol{M} \succeq 0$.
		\item[(2)] $\boldsymbol{A} \succeq 0$, $(\boldsymbol{I} - \boldsymbol{A}\boldsymbol{A}^{\dagger})\boldsymbol{B} = 0$, $\boldsymbol{D} - \boldsymbol{B}^H\boldsymbol{A}^{\dagger}\boldsymbol{B} \succeq 0$.
	\end{itemize}
\end{lem}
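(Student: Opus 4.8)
The plan is to prove the two implications $(2)\Rightarrow(1)$ and $(1)\Rightarrow(2)$ separately, using a single block Gaussian-elimination congruence as the common engine. The starting point is that, since $\boldsymbol{A}$ is Hermitian, $\boldsymbol{A}^{\dagger}$ is also Hermitian, $\boldsymbol{A}\boldsymbol{A}^{\dagger}$ is the orthogonal projector onto $\mathcal{R}(\boldsymbol{A})$, and the condition $(\boldsymbol{I}-\boldsymbol{A}\boldsymbol{A}^{\dagger})\boldsymbol{B}=0$ is precisely $\mathcal{R}(\boldsymbol{B})\subseteq\mathcal{R}(\boldsymbol{A})$, equivalently $\boldsymbol{A}\boldsymbol{A}^{\dagger}\boldsymbol{B}=\boldsymbol{B}$ and (taking conjugate transpose, using $(\boldsymbol{A}^{\dagger})^{H}=\boldsymbol{A}^{\dagger}$) $\boldsymbol{B}^{H}\boldsymbol{A}^{\dagger}\boldsymbol{A}=\boldsymbol{B}^{H}$. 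I will also note that $\boldsymbol{B}^{H}\boldsymbol{A}^{\dagger}\boldsymbol{B}$ is then independent of the particular generalized inverse used.

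For $(2)\Rightarrow(1)$, define the invertible matrix $\boldsymbol{L}=\begin{pmatrix}\boldsymbol{I} & -\boldsymbol{A}^{\dagger}\boldsymbol{B}\\ 0 & \boldsymbol{I}\end{pmatrix}$ and compute $\boldsymbol{L}^{H}\boldsymbol{M}\boldsymbol{L}$. Expanding the $2\times2$ block product and using $\boldsymbol{A}\boldsymbol{A}^{\dagger}\boldsymbol{B}=\boldsymbol{B}$ together with $\boldsymbol{B}^{H}\boldsymbol{A}^{\dagger}\boldsymbol{A}=\boldsymbol{B}^{H}$, the off-diagonal blocks cancel and one obtains $\boldsymbol{L}^{H}\boldsymbol{M}\boldsymbol{L}=\begin{pmatrix}\boldsymbol{A} & 0\\ 0 & \boldsymbol{D}-\boldsymbol{B}^{H}\boldsymbol{A}^{\dagger}\boldsymbol{B}\end{pmatrix}$, block-diagonal with both diagonal blocks PSD by hypothesis. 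Since congruence preserves positive semidefiniteness, $\boldsymbol{M}=(\boldsymbol{L}^{-1})^{H}\big(\boldsymbol{L}^{H}\boldsymbol{M}\boldsymbol{L}\big)\boldsymbol{L}^{-1}\succeq0$.

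For $(1)\Rightarrow(2)$, restricting the quadratic form $\boldsymbol{v}^{H}\boldsymbol{M}\boldsymbol{v}\ge0$ to vectors $(\boldsymbol{v}_1,0)$ immediately gives $\boldsymbol{A}\succeq0$. The crux is the range condition: take $\boldsymbol{w}\in\ker\boldsymbol{A}=\mathcal{R}(\boldsymbol{A})^{\perp}$ and an arbitrary $\boldsymbol{z}$, and evaluate the form at $(\lambda\boldsymbol{w},\boldsymbol{z})$ for $\lambda\in\mathbb{C}$; the $\boldsymbol{w}^{H}\boldsymbol{A}\boldsymbol{w}$ term vanishes, leaving $2\,\mathrm{Re}\big(\bar\lambda\,\boldsymbol{w}^{H}\boldsymbol{B}\boldsymbol{z}\big)+\boldsymbol{z}^{H}\boldsymbol{D}\boldsymbol{z}\ge0$ for all $\lambda$. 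Choosing the phase of $\lambda$ to make $\mathrm{Re}(\bar\lambda\,\boldsymbol{w}^{H}\boldsymbol{B}\boldsymbol{z})$ negative and letting $|\lambda|\to\infty$ forces $\boldsymbol{w}^{H}\boldsymbol{B}\boldsymbol{z}=0$; since $\boldsymbol{z}$ is arbitrary, $\boldsymbol{B}^{H}\boldsymbol{w}=0$, i.e. $\ker\boldsymbol{A}\subseteq\ker\boldsymbol{B}^{H}$, which is exactly $\mathcal{R}(\boldsymbol{B})\subseteq\mathcal{R}(\boldsymbol{A})$ and hence $(\boldsymbol{I}-\boldsymbol{A}\boldsymbol{A}^{\dagger})\boldsymbol{B}=0$. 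With this established, the same congruence yields $\boldsymbol{L}^{H}\boldsymbol{M}\boldsymbol{L}=\begin{pmatrix}\boldsymbol{A} & 0\\ 0 & \boldsymbol{D}-\boldsymbol{B}^{H}\boldsymbol{A}^{\dagger}\boldsymbol{B}\end{pmatrix}\succeq0$, and restricting to the lower block gives $\boldsymbol{D}-\boldsymbol{B}^{H}\boldsymbol{A}^{\dagger}\boldsymbol{B}\succeq0$.

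The main obstacle is the singular case. If $\boldsymbol{A}$ is invertible, the congruence is classical with $\boldsymbol{A}^{-1}$ in place of $\boldsymbol{A}^{\dagger}$ and there is nothing subtle; when $\boldsymbol{A}$ is rank-deficient one must first extract the range condition $\mathcal{R}(\boldsymbol{B})\subseteq\mathcal{R}(\boldsymbol{A})$ from positive semidefiniteness before the elimination step even makes sense, and then check — invoking that $\boldsymbol{A}$ and $\boldsymbol{A}^{\dagger}$ are Hermitian and $\boldsymbol{A}\boldsymbol{A}^{\dagger}\boldsymbol{B}=\boldsymbol{B}$ — that the cross terms in $\boldsymbol{L}^{H}\boldsymbol{M}\boldsymbol{L}$ genuinely vanish. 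The complex-scalar phase argument in the range step is the only non-mechanical ingredient; everything else is bookkeeping on $2\times2$ block matrices.
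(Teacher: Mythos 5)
Your proof is correct. The paper itself offers no proof of this lemma---it is stated as a known result (the generalized Schur complement condition for positive semidefiniteness, with the accompanying remark citing Boyd and Vandenberghe for the range-space reformulation)---so there is nothing to compare against; your argument is the standard textbook one and is complete. Both directions check out: the congruence $\boldsymbol{L}^{H}\boldsymbol{M}\boldsymbol{L}$ with $\boldsymbol{L}=\left(\begin{smallmatrix}\boldsymbol{I} & -\boldsymbol{A}^{\dagger}\boldsymbol{B}\\ 0 & \boldsymbol{I}\end{smallmatrix}\right)$ does block-diagonalize $\boldsymbol{M}$ once $\boldsymbol{A}\boldsymbol{A}^{\dagger}\boldsymbol{B}=\boldsymbol{B}$ and $\boldsymbol{B}^{H}\boldsymbol{A}^{\dagger}\boldsymbol{A}=\boldsymbol{B}^{H}$ are in hand, and your phase-and-scaling argument on vectors $(\lambda\boldsymbol{w},\boldsymbol{z})$ with $\boldsymbol{w}\in\ker\boldsymbol{A}$ is exactly the non-trivial step needed to extract $\mathcal{R}(\boldsymbol{B})\subseteq\mathcal{R}(\boldsymbol{A})$ in the singular case before the elimination is legitimate.
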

\begin{remk}
	The condition $(\boldsymbol{I} - \boldsymbol{A}\boldsymbol{A}^{\dagger})\boldsymbol{B} = 0$ can be equivalently expressed as $\boldsymbol{B} \in {\mathcal{R}(\boldsymbol{A})}$ \cite{boyd2004convex}.
\end{remk}

\begin{lem}[Vandermonde Decomposition \cite{toeplitz1911theorie}] \label{lem:vander}
	A Toeplitz matrix $\mathcal{T}(\boldsymbol{u})$ can be decomposed as:
	\begin{equation}
		\mathcal{T}(\boldsymbol{u}) = \boldsymbol{V}\boldsymbol{D}\boldsymbol{V}^H = \sum_{j=1}^{r} d_j \boldsymbol{a}(f_j) \boldsymbol{a}(f_j)^H,
	\end{equation}
	where $r = \operatorname{rank} (\mathcal{T}(\boldsymbol{u}))$, $\boldsymbol{V} = [\boldsymbol{a}(f_1), \boldsymbol{a}(f_2),\cdots, \boldsymbol{a}(f_r)]$, $f_j \in [0,1)$, and $\boldsymbol{D}=\operatorname{diag}(d_1,d_2,\cdots,d_r)$ with $d_j>0$.
\end{lem}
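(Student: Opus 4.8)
\noindent\emph{Proof plan.} The plan is to run the classical Carath\'eodory--Fej\'er argument, under the hypothesis $\mathcal{T}(\boldsymbol{u})\succeq0$ that is implicit in the conclusion $d_j>0$ (and that always holds in the setting of \eqref{eq:an_sdp}); write $r=\operatorname{rank}(\mathcal{T}(\boldsymbol{u}))$. First I would reduce to the rank-deficient case: if $r=n$, border $\mathcal{T}(\boldsymbol{u})$ with one extra free parameter $u_n$ and apply Lemma~\ref{lem:schur} to the resulting $(n{+}1)\times(n{+}1)$ Toeplitz matrix; since its leading $n\times n$ block is positive definite, the admissible $u_n$ form a closed disk, and any boundary value yields an $(n{+}1)\times(n{+}1)$ PSD Toeplitz matrix of rank exactly $n$. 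So henceforth the rank is strictly below the size. In that case I would show the leading $r\times r$ principal block $\mathcal{T}_r$ is positive definite: a null vector of $\mathcal{T}_r$, zero-padded and then translated along the diagonal, would (by Toeplitz structure and PSD-ness) produce more linearly independent null vectors of $\mathcal{T}(\boldsymbol{u})$ than its nullity allows. Hence the leading $(r{+}1)\times(r{+}1)$ block has a one-dimensional kernel spanned by some $\boldsymbol{v}=(v_0,\dots,v_{r-1},1)^{T}$ (with $v_r\neq0$, else $\mathcal{T}_r$ would be singular); set $p(z)=\sum_{k=0}^{r}v_k z^{k}$.

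The heart of the proof is to show that $p$ has exactly $r$ \emph{simple} roots, all on the unit circle. I would use two ingredients. Persymmetry of Hermitian Toeplitz matrices forces $\boldsymbol{v}$ to equal its own conjugate--reversal up to a unimodular factor, i.e.\ $p$ is self-inversive, so its zeros occur in reciprocal--conjugate pairs $\{z_0,1/\bar z_0\}$. Independently, iterating the ``border and extend'' step---appending $u_m=-v_0^{-1}\sum_{k=1}^{r}v_k u_{m-k}$ for $m\ge r$---keeps the Toeplitz matrix PSD of rank $r$ at every size (a Cauchy interlacing argument on the extended rank-$r$ matrix shows the newly created eigenvalue cannot be negative), so $|u_m|\le u_0$ for all $m$. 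But $(u_m)$ obeys the order-$r$ linear recurrence with characteristic polynomial $p$; an off-circle zero, or a repeated zero, would make the generic solution of that recurrence unbounded, and the only way the actual sequence can avoid this is to satisfy a strictly shorter recurrence---which would force $\operatorname{rank}(\mathcal{T}(\boldsymbol{u}))<r$. Hence all $r$ zeros are simple and unimodular; write them $z_j=e^{2\pi i f_j}$ with distinct $f_j\in[0,1)$.

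The remainder is bookkeeping. From $p(z_j)=0$ one checks that $\boldsymbol{a}(f_j)$ is orthogonal to every diagonal translate of $\boldsymbol{v}$, and those translates span $\ker\mathcal{T}(\boldsymbol{u})$ by a dimension count, so each $\boldsymbol{a}(f_j)\in\mathcal{R}(\mathcal{T}(\boldsymbol{u}))$. Distinctness of the $f_j$ makes $\boldsymbol{V}=[\boldsymbol{a}(f_1),\dots,\boldsymbol{a}(f_r)]$ of full column rank $r$, hence $\mathcal{R}(\boldsymbol{V})=\mathcal{R}(\mathcal{T}(\boldsymbol{u}))$ and $\mathcal{T}(\boldsymbol{u})=\boldsymbol{V}\boldsymbol{C}\boldsymbol{V}^{H}$ for a unique Hermitian $\boldsymbol{C}$; comparing the generating sequences of the two Toeplitz matrices forces $\boldsymbol{C}=\operatorname{diag}(d_1,\dots,d_r)$, and pushing $\mathcal{T}(\boldsymbol{u})\succeq0$ through the surjective $\boldsymbol{V}^{H}$ gives $d_j\ge0$, with $d_j>0$ because the rank is exactly $r$. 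Finally, if the enlargement of the first step was used, truncating $\sum_j d_j\boldsymbol{a}(f_j)\boldsymbol{a}(f_j)^{H}$ back to size $n$ still reproduces the original $\mathcal{T}(\boldsymbol{u})$, since a rank-$r$ PSD Toeplitz matrix is determined by its leading $(r{+}1)\times(r{+}1)$ block through the recurrence generated by $\boldsymbol{v}$.

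I expect the only real obstacle to be the middle step---certifying that the kernel polynomial's zeros are precisely $r$ distinct points of the unit circle; everything else is Schur complements, rank counting, and Vandermonde invertibility. An alternative to the ``self-inversive $+$ bounded flat extension'' route is to quote the trigonometric moment problem (a PSD Toeplitz sequence is the moment sequence of a positive measure on the circle, and finite rank $r$ forces that measure to be $r$-atomic), but that merely relocates the same difficulty into a cited black box.
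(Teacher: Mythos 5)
The paper does not prove this lemma at all: it is stated as a classical result (Carath\'eodory--Fej\'er, cited to \cite{toeplitz1911theorie}) and used as a black box, so there is no in-paper argument to compare against. Your sketch is the standard linear-algebraic proof of that classical theorem, and it is sound in outline: the reduction to $r<n$ by bordering, the nonsingularity of the leading $r\times r$ block via shifted null vectors, the one-dimensional kernel of the $(r{+}1)\times(r{+}1)$ block, self-inversivity from persymmetry, the flat extension, and the final rank/Vandermonde bookkeeping are all the right ingredients. Two spots deserve the most care when you write it out. First, in the ``bounded recurrence'' step, a root strictly \emph{inside} the unit disk produces a mode that is bounded as $m\to+\infty$, so one-sided boundedness alone does not kill it; you need either the two-sided sequence (via $u_{-m}=\overline{u_m}$, so the mode blows up as $m\to-\infty$) or your fallback that dropping its self-inversive partner forces a shorter recurrence and hence rank below $r$ --- your sketch does contain the latter escape, but make the logic explicit. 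Second, for the bordering step when $r=n$ you should justify that the admissible disk for $u_n$ is nonempty with nonempty boundary (it is, since $\mathcal{T}(\boldsymbol{u})\succ0$ makes the Schur-complement radius positive), and that truncating the $(n{+}1)$-dimensional decomposition back to size $n$ preserves the form $\sum_j d_j\boldsymbol{a}(f_j)\boldsymbol{a}(f_j)^H$, which it does coordinatewise. With those details filled in, your proof is a complete and self-contained replacement for the paper's citation.
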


\begin{lem} \label{lem:limit}
	Let $\boldsymbol{T} \succeq 0$ with $\operatorname{{rank}}(\boldsymbol{T}) = r$. If $\boldsymbol{x} \in \mathcal{R}(\boldsymbol{T})$, then:
	\begin{equation}
		\lim_{\beta \rightarrow 0 } \boldsymbol{x}^H (\boldsymbol{T} + \beta \boldsymbol{I} )^{-1} \boldsymbol{x} = \boldsymbol{x}^H (\boldsymbol{T} )^{\dagger} \boldsymbol{x}.
	\end{equation}
	Otherwise:
	\begin{equation}
		\lim_{\beta \rightarrow 0 } \boldsymbol{x}^H (\boldsymbol{T} + \beta \boldsymbol{I} )^{-1} \boldsymbol{x} = +\infty.
	\end{equation}
\end{lem}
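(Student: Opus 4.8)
The plan is to diagonalize $\boldsymbol{T}$ and reduce the claim to an elementary statement about a sum of scalars. Since $\boldsymbol{T} \succeq 0$ is Hermitian, write its spectral decomposition $\boldsymbol{T} = \sum_{i=1}^{n} \lambda_i \boldsymbol{u}_i \boldsymbol{u}_i^H$, where $\{\boldsymbol{u}_i\}_{i=1}^{n}$ is an orthonormal basis of $\mathbb{C}^n$ and, using $\operatorname{rank}(\boldsymbol{T}) = r$, the eigenvalues satisfy $\lambda_1 \ge \cdots \ge \lambda_r > 0 = \lambda_{r+1} = \cdots = \lambda_n$. For every $\beta > 0$ the matrix $\boldsymbol{T} + \beta \boldsymbol{I}$ has eigenvalues $\lambda_i + \beta \ge \beta > 0$, hence is invertible with $(\boldsymbol{T} + \beta \boldsymbol{I})^{-1} = \sum_{i=1}^{n} (\lambda_i + \beta)^{-1}\boldsymbol{u}_i \boldsymbol{u}_i^H$; likewise $\boldsymbol{T}^{\dagger} = \sum_{i=1}^{r} \lambda_i^{-1}\boldsymbol{u}_i \boldsymbol{u}_i^H$, and $\mathcal{R}(\boldsymbol{T}) = \operatorname{span}\{\boldsymbol{u}_1,\dots,\boldsymbol{u}_r\}$.

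Next I would expand $\boldsymbol{x}$ in this eigenbasis, $\boldsymbol{x} = \sum_{i=1}^{n} c_i \boldsymbol{u}_i$ with $c_i = \boldsymbol{u}_i^H \boldsymbol{x}$, and compute
\[
\boldsymbol{x}^H (\boldsymbol{T} + \beta \boldsymbol{I})^{-1} \boldsymbol{x} = \sum_{i=1}^{r} \frac{|c_i|^2}{\lambda_i + \beta} + \frac{1}{\beta}\sum_{i=r+1}^{n} |c_i|^2 .
\]
The pivotal observation is that $\boldsymbol{x} \in \mathcal{R}(\boldsymbol{T})$ if and only if $c_i = 0$ for all $i > r$.

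In the first case the second sum vanishes and, since $\lambda_i > 0$ for $i \le r$, each term $|c_i|^2/(\lambda_i + \beta)$ is continuous in $\beta$ at $\beta = 0$; letting $\beta \to 0$ yields $\sum_{i=1}^{r} |c_i|^2/\lambda_i = \boldsymbol{x}^H \boldsymbol{T}^{\dagger}\boldsymbol{x}$, as claimed. In the second case there is an index $i_0 > r$ with $c_{i_0} \ne 0$, so the right-hand side is bounded below by $|c_{i_0}|^2/\beta$, which tends to $+\infty$ as $\beta \to 0^+$; the first sum is nonnegative, so the whole quantity diverges to $+\infty$.

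I do not expect a serious obstacle: once the spectral decomposition is in place, the argument is just the bookkeeping that separates the component of $\boldsymbol{x}$ in $\mathcal{R}(\boldsymbol{T})$ from its orthogonal complement. The only steps needing a line of justification are the pseudo-inverse formula $\boldsymbol{T}^{\dagger} = \sum_{i=1}^{r} \lambda_i^{-1}\boldsymbol{u}_i \boldsymbol{u}_i^H$ and the equivalence $\boldsymbol{x} \in \mathcal{R}(\boldsymbol{T}) \iff c_i = 0$ for $i > r$, both immediate from orthonormality of $\{\boldsymbol{u}_i\}$.
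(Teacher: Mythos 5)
Your proof is correct, and it uses exactly the approach the paper indicates (the paper omits the details, stating only that the lemma ``can be proven using eigenvalue decomposition''). Your spectral-decomposition argument cleanly fills in that omitted proof, including the two justifications the paper leaves implicit: the formula for $\boldsymbol{T}^{\dagger}$ in the eigenbasis and the characterization of $\mathcal{R}(\boldsymbol{T})$ as the span of the eigenvectors with nonzero eigenvalues.
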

This lemma can be proven using eigenvalue decomposition, though the detailed proof is omitted here.
The condition $\boldsymbol{x} \in \mathcal{R}(\boldsymbol{T})$ is challenging to handle directly in optimization problems. Lemma \ref{lem:limit} reformulates this condition in a limit-based form, facilitating algorithm design.

Based on these key properties of low-rank PSD Toeplitz matrices, we establish an alternative formulation of the atomic norm's semidefinite characterization.
\begin{thm} \label{thm:limit_sdp}
	For any mixture of complex sinusoids $\boldsymbol{x} \in \mathbb{C}^n$, the atomic norm can be expressed as:
	\begin{equation}
		\|\boldsymbol{x}\|_{\mathcal{A}} = \inf _{\substack{d_j >0, \\f_j \in [0,1), {r}}} \lim_{\beta \rightarrow 0} \left\{\frac{1}{2} \sum_{j=1}^{r} d_j +\frac{1}{2} \boldsymbol{x}^H \boldsymbol{C}^{-1} \boldsymbol{x}\right\},
		\label{eq:an_limit}
	\end{equation}
	where $\boldsymbol{C} = \sum_{j=1}^{r} d_j \boldsymbol{a}(f_j)\boldsymbol{a}(f_j)^H + \beta \boldsymbol{I}$.
\end{thm}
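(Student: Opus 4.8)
The plan is to transform the SDP characterization \eqref{eq:an_sdp} into the limit form \eqref{eq:an_limit} in three moves, each using one of the preceding lemmas. First I would apply Lemma~\ref{lem:schur} to the block matrix in \eqref{eq:an_sdp} with $\boldsymbol{A} = \mathcal{T}(\boldsymbol{u})$, $\boldsymbol{B} = \boldsymbol{x}$, $\boldsymbol{D} = t$: the PSD constraint holds iff $\mathcal{T}(\boldsymbol{u}) \succeq 0$, $\boldsymbol{x} \in \mathcal{R}(\mathcal{T}(\boldsymbol{u}))$, and $t \ge \boldsymbol{x}^H \mathcal{T}(\boldsymbol{u})^{\dagger}\boldsymbol{x}$. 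Since $t$ enters the objective only through $\tfrac12 t$, the infimum over $t$ is attained at $t = \boldsymbol{x}^H \mathcal{T}(\boldsymbol{u})^{\dagger}\boldsymbol{x}$, so that
\[
\|\boldsymbol{x}\|_{\mathcal{A}} = \inf_{\boldsymbol{u}} \Bigl\{ \tfrac{1}{2n}\operatorname{tr}(\mathcal{T}(\boldsymbol{u})) + \tfrac12 \boldsymbol{x}^H \mathcal{T}(\boldsymbol{u})^{\dagger}\boldsymbol{x} \ :\ \mathcal{T}(\boldsymbol{u}) \succeq 0,\ \boldsymbol{x} \in \mathcal{R}(\mathcal{T}(\boldsymbol{u})) \Bigr\}.
\]

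Next I would reparametrize the PSD Toeplitz matrix via Lemma~\ref{lem:vander}: every PSD Toeplitz matrix equals $\boldsymbol{T} := \sum_{j=1}^{r} d_j \boldsymbol{a}(f_j)\boldsymbol{a}(f_j)^H$ for some $d_j>0$, $f_j\in[0,1)$, and conversely every such finite sum is Hermitian, PSD and Toeplitz. Because $\|\boldsymbol{a}(f)\|^2 = n$, we have $\operatorname{tr}(\boldsymbol{T}) = n\sum_j d_j$, hence $\tfrac{1}{2n}\operatorname{tr}(\boldsymbol{T}) = \tfrac12\sum_j d_j$; together with $\boldsymbol{x}^H\boldsymbol{T}^{\dagger}\boldsymbol{x}$ this makes the objective depend only on $\boldsymbol{T}$, so it is well defined on the PSD Toeplitz cone regardless of which (possibly non-minimal) decomposition is used. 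The infimum above therefore equals
\[
\inf_{\substack{d_j > 0,\ f_j \in [0,1),\ r \\ \boldsymbol{x} \in \mathcal{R}(\boldsymbol{T})}} \Bigl\{ \tfrac12 \sum_{j=1}^{r} d_j + \tfrac12 \boldsymbol{x}^H \boldsymbol{T}^{\dagger}\boldsymbol{x} \Bigr\},
\]
where the feasible set is nonempty since $r=n$ distinct frequencies make $\boldsymbol{T}$ nonsingular (Vandermonde), giving $\boldsymbol{x}\in\mathbb{C}^n=\mathcal{R}(\boldsymbol{T})$.

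Finally I would absorb the range constraint into a limit via Lemma~\ref{lem:limit}. For a fixed tuple $(d_j,f_j,r)$ with $\boldsymbol{C}=\boldsymbol{T}+\beta\boldsymbol{I}$ as in the statement, the lemma gives $\lim_{\beta\to0}\boldsymbol{x}^H\boldsymbol{C}^{-1}\boldsymbol{x} = \boldsymbol{x}^H\boldsymbol{T}^{\dagger}\boldsymbol{x}$ when $\boldsymbol{x}\in\mathcal{R}(\boldsymbol{T})$ and $+\infty$ otherwise; adding the $\beta$-independent term $\tfrac12\sum_j d_j$ preserves this dichotomy. Taking the infimum over all tuples discards the $+\infty$ branches (the set from the previous step being nonempty), so the right side of \eqref{eq:an_limit} collapses to the infimum of the second display, which equals $\|\boldsymbol{x}\|_{\mathcal{A}}$.

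The hard part will be justifying the interchange of the infimum and the limit in \eqref{eq:an_limit}: one must \emph{not} pull $\lim_{\beta\to0}$ outside the infimum but instead evaluate the inner limit pointwise in the decomposition parameters, which is legitimate precisely because Lemma~\ref{lem:limit} supplies the clean finite-versus-$+\infty$ alternative. A secondary point requiring care is that the Vandermonde reparametrization is faithful at the level of objective values — so non-minimal decompositions are harmless — and that feasibility holds, so the two infima genuinely coincide rather than both being $+\infty$.
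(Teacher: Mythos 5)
Your proposal is correct and follows essentially the same route as the paper's own proof: Schur complement to eliminate $t$ and expose the range constraint, Vandermonde decomposition to reparametrize the PSD Toeplitz cone, and Lemma~\ref{lem:limit} to replace the range constraint by the inner limit. The extra details you supply (the trace identity $\operatorname{tr}(\boldsymbol{T}) = n\sum_j d_j$, nonemptiness of the feasible set, and the pointwise-in-$\boldsymbol{\theta}$ evaluation of the limit) are all consistent with, and slightly more explicit than, what the paper writes.
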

\begin{proof}
	By Lemma \ref{lem:schur}, the semidefinite characterization of the atomic norm in \eqref{eq:an_sdp} can be reformulated as:
	\begin{equation}
		\begin{aligned}
			\|\boldsymbol{x}\|_{\mathcal{A}} = &\inf _{\boldsymbol{u}}\left\{\frac{1}{2 n} \operatorname{tr}(\mathcal{T}(\boldsymbol{u}))+\frac{1}{2} \boldsymbol{x}^H\mathcal{T}(\boldsymbol{u})^{\dagger}\boldsymbol{x}\right\}, \\  &\text {s.t. } \mathcal{T}(\boldsymbol{u}) \succeq 0, \boldsymbol{x} \in \mathcal{R}(\mathcal{T}(\boldsymbol{u})).
		\end{aligned}
	\end{equation}
	Using the Vandermonde decomposition of the PSD Toeplitz matrices (see Lemma \ref{lem:vander}), we have
	\begin{equation}
		\begin{aligned}
			\|\boldsymbol{x}\|_{\mathcal{A}} = &\inf _{\substack{d_j >0, \\f_j \in [0,1), {r}}}\left\{\frac{1}{2} \sum_{j=1}^{r} d_j + \frac{1}{2} \boldsymbol{x}^H(\sum_{j=1}^{r}d_j \boldsymbol{a}(f_j)\boldsymbol{a}(f_j)^H)^{\dagger}\boldsymbol{x}\right\}, \\  &\text {s.t. } \boldsymbol{x} \in \mathcal{R}(\sum_{j=1}^{r}d_j \boldsymbol{a}(f_j)\boldsymbol{a}(f_j)^H).
		\end{aligned}
		\label{eq:an_van}
	\end{equation}
	Finally, leveraging the result of Lemma \ref{lem:limit}, we can establish that the right-hand side of equation \eqref{eq:an_limit} is equivalent to that of equation \eqref{eq:an_van}. 
\end{proof}

The above results provide a limit-based formulation of atomic norm for mixed sinusoids from its semidefinite characterization.
In fact, this new formulation can be extended to more general atomic sets where SDP may not be established.

\begin{thm} \label{thm:limit_general_atom}
	Consider the atomic set $\mathcal{A}:=\left\{\boldsymbol{a}_k\right\}$.
	Assume that any $r \leq n$ distinct atoms $\{\boldsymbol{a}_1, \boldsymbol{a}_2, \cdots, \boldsymbol{a}_r\}$ in the set are linearly independent. 
	Then, the following holds:
	\begin{equation}
		\begin{aligned}
			\|\boldsymbol{x}\|_{\mathcal{A}} = &\inf _{\substack{d_j >0, \\\boldsymbol{a}_j \in \mathcal{A}, {r}}}\left\{\frac{1}{2} \sum_{j=1}^{r} d_j + \frac{1}{2} \boldsymbol{x}^H(\sum_{j=1}^{r}d_j \boldsymbol{a}_j\boldsymbol{a}_j^H)^{\dagger}\boldsymbol{x}\right\}, \\  &\text{s.t. } \boldsymbol{x} \in \mathcal{R}(\sum_{j=1}^{r}d_j \boldsymbol{a}_j\boldsymbol{a}_j^H)
		\end{aligned}
		\label{eq:an_limit_general}
	\end{equation}
\end{thm}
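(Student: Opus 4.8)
\emph{Proof proposal.} The plan is to prove the two inequalities between $\|\boldsymbol{x}\|_{\mathcal{A}}$ and the right-hand side of \eqref{eq:an_limit_general}, which I denote $\nu(\boldsymbol{x})$, working directly from the definition \eqref{eq:an_general} together with elementary pseudo-inverse facts. For a finite family $\boldsymbol{a}_1,\dots,\boldsymbol{a}_r\in\mathcal{A}$ with weights $d_j>0$, I will write $\boldsymbol{B}=[\boldsymbol{a}_1\ \cdots\ \boldsymbol{a}_r]$, $\boldsymbol{\Lambda}=\operatorname{diag}(d_1,\dots,d_r)$, and $\boldsymbol{G}=\boldsymbol{B}\boldsymbol{\Lambda}^{1/2}$, so that $\boldsymbol{C}:=\sum_j d_j\boldsymbol{a}_j\boldsymbol{a}_j^H=\boldsymbol{B}\boldsymbol{\Lambda}\boldsymbol{B}^H=\boldsymbol{G}\boldsymbol{G}^H$ and, since $\boldsymbol{\Lambda}\succ0$, $\mathcal{R}(\boldsymbol{C})=\mathcal{R}(\boldsymbol{G})=\operatorname{span}\{\boldsymbol{a}_1,\dots,\boldsymbol{a}_r\}$. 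The single fact I will reuse is that, whenever $\boldsymbol{x}\in\mathcal{R}(\boldsymbol{G})$, one has $\boldsymbol{x}^H\boldsymbol{C}^\dagger\boldsymbol{x}=\|\boldsymbol{G}^\dagger\boldsymbol{x}\|^2=\min\{\|\boldsymbol{v}\|^2:\boldsymbol{G}\boldsymbol{v}=\boldsymbol{x}\}$, with the minimum attained at $\boldsymbol{v}=\boldsymbol{G}^\dagger\boldsymbol{x}$.

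To get $\nu(\boldsymbol{x})\le\|\boldsymbol{x}\|_{\mathcal{A}}$, I would fix $\varepsilon>0$, pick a representation $\boldsymbol{x}=\sum_{k}s_k\boldsymbol{a}_k$ with $\sum_k|s_k|\le\|\boldsymbol{x}\|_{\mathcal{A}}+\varepsilon$, drop zero terms and merge repeated atoms (which does not increase $\sum_k|s_k|$), and set $d_k=|s_k|$. Then $\boldsymbol{x}\in\operatorname{span}\{\boldsymbol{a}_k\}=\mathcal{R}(\boldsymbol{C})$, so the triple is feasible, and taking $\boldsymbol{v}=\boldsymbol{\Lambda}^{-1/2}(s_1,\dots,s_r)^T$, which satisfies $\boldsymbol{G}\boldsymbol{v}=\boldsymbol{B}(s_1,\dots,s_r)^T=\boldsymbol{x}$, the minimality above gives $\boldsymbol{x}^H\boldsymbol{C}^\dagger\boldsymbol{x}\le\|\boldsymbol{v}\|^2=\sum_k|s_k|$; hence the objective is $\le\frac12\sum_k|s_k|+\frac12\sum_k|s_k|=\sum_k|s_k|\le\|\boldsymbol{x}\|_{\mathcal{A}}+\varepsilon$, and letting $\varepsilon\downarrow0$ finishes this direction. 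When the (distinct) atoms number at most $n$, the hypothesis makes $\boldsymbol{B}$ full column rank, $\boldsymbol{v}$ unique, and the inequality an equality — the exact analogue of passing from \eqref{eq:an_sdp} to \eqref{eq:an_van} in Theorem \ref{thm:limit_sdp}.

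For the reverse inequality $\|\boldsymbol{x}\|_{\mathcal{A}}\le\nu(\boldsymbol{x})$, I would take an arbitrary feasible triple $(d_j,\boldsymbol{a}_j,r)$, so $\boldsymbol{x}\in\mathcal{R}(\boldsymbol{C})=\mathcal{R}(\boldsymbol{G})$, and set $\boldsymbol{w}=\boldsymbol{\Lambda}^{1/2}\boldsymbol{G}^\dagger\boldsymbol{x}$. Then $\boldsymbol{B}\boldsymbol{w}=\boldsymbol{G}\boldsymbol{G}^\dagger\boldsymbol{x}=\boldsymbol{x}$, so $\boldsymbol{x}=\sum_j w_j\boldsymbol{a}_j$ is an admissible atomic decomposition with $\sum_j|w_j|^2/d_j=\|\boldsymbol{G}^\dagger\boldsymbol{x}\|^2=\boldsymbol{x}^H\boldsymbol{C}^\dagger\boldsymbol{x}$; Cauchy--Schwarz followed by AM--GM then yields
\[
\|\boldsymbol{x}\|_{\mathcal{A}}\le\sum_j|w_j|\le\Big(\sum_j d_j\Big)^{1/2}\Big(\sum_j\tfrac{|w_j|^2}{d_j}\Big)^{1/2}=\sqrt{\Big(\sum_j d_j\Big)\boldsymbol{x}^H\boldsymbol{C}^\dagger\boldsymbol{x}}\le\tfrac12\sum_j d_j+\tfrac12\boldsymbol{x}^H\boldsymbol{C}^\dagger\boldsymbol{x},
\]
which is exactly the objective in \eqref{eq:an_limit_general}; taking the infimum over feasible triples gives the claim. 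The degenerate case $\|\boldsymbol{x}\|_{\mathcal{A}}=+\infty$ is consistent, since then no feasible triple exists and $\nu(\boldsymbol{x})=+\infty$ as well.

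I expect the only delicate point to be the interplay between the Moore--Penrose pseudo-inverse and the range constraint: one must invoke $\boldsymbol{x}\in\mathcal{R}(\boldsymbol{C})$ precisely to ensure that $\boldsymbol{G}^\dagger\boldsymbol{x}$ is an \emph{exact} solution of $\boldsymbol{G}\boldsymbol{v}=\boldsymbol{x}$, and must exploit the factorization $\boldsymbol{C}=\boldsymbol{G}\boldsymbol{G}^H$ to rewrite $\boldsymbol{x}^H\boldsymbol{C}^\dagger\boldsymbol{x}$ as a least-norm problem. The linear-independence assumption is what makes the two infima coincide ``term by term'' on supports of size $\le n$ — the role played by the Vandermonde decomposition (Lemma \ref{lem:vander}) in Theorem \ref{thm:limit_sdp} — although, as the chain of inequalities above shows, the two inequalities themselves already establish the equivalence.
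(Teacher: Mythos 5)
Your proof is correct, and it follows the same overall skeleton as the paper's: show $\nu(\boldsymbol{x})\le\|\boldsymbol{x}\|_{\mathcal{A}}$ by evaluating the objective at $d_j=|s_j|$ for a (near-)optimal atomic decomposition, and show the reverse by turning any feasible triple into an admissible decomposition and applying AM--GM. The difference is purely in how the quadratic form $\boldsymbol{x}^H\boldsymbol{C}^{\dagger}\boldsymbol{x}$ is handled: the paper invokes the explicit pseudo-inverse formula for the full-rank factorization $\boldsymbol{C}=\boldsymbol{A}\boldsymbol{D}\boldsymbol{A}^H$ (which is where the linear-independence hypothesis and the implicit restriction to decompositions with $r\le n$ atoms enter), whereas you use the least-norm variational characterization $\boldsymbol{x}^H\boldsymbol{C}^{\dagger}\boldsymbol{x}=\min\{\|\boldsymbol{v}\|^2:\boldsymbol{G}\boldsymbol{v}=\boldsymbol{x}\}$ with $\boldsymbol{G}=\boldsymbol{B}\boldsymbol{\Lambda}^{1/2}$. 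This buys you two small but genuine improvements in rigor over the paper's version: your first direction needs only an inequality, so it covers decompositions with more than $n$ (possibly dependent) atoms and handles the case where the infimum in \eqref{eq:an_general} is not attained via the $\varepsilon$-argument, both of which the paper's ``since this holds for any decomposition'' glosses over; and your second direction constructs the coefficient vector $\boldsymbol{w}=\boldsymbol{\Lambda}^{1/2}\boldsymbol{G}^{\dagger}\boldsymbol{x}$ explicitly rather than assuming a representation in the chosen atoms is given. The explicit Cauchy--Schwarz step you insert before AM--GM is equivalent to the paper's termwise bound $\frac12 d_j+\frac12|s_j|^2/d_j\ge|s_j|$, just aggregated differently.
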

\begin{proof}
	Let the right-hand side of \eqref{eq:an_limit_general} be denoted as $\operatorname{Limit}(\boldsymbol{x})$. 
	Suppose $\boldsymbol{x} = \sum_{j=1}^{r} s_j \boldsymbol{a}_j = \boldsymbol{A} \boldsymbol{s}$, with $\boldsymbol{A} = \left[\boldsymbol{a}_1,\boldsymbol{a}_2,\cdots,\boldsymbol{a}_r\right]$ where $r\leq n$, and $\boldsymbol{s} = \left[s_1,s_2,\cdots,s_r\right]$, where $s_j \neq 0$.
	Let $\sum_{j=1}^{r}\left|s_j\right| \boldsymbol{a}_j\boldsymbol{a}_j^H = \boldsymbol{A}\boldsymbol{D}\boldsymbol{A}^H$, where $\boldsymbol{D} = \operatorname{diag}(\left|s_1\right|,\left|s_2\right|,\cdots,\left|s_r\right|)$. 
	By assumption, $\operatorname{{rank}}(\sum_{j=1}^{r}\left|s_j\right| \boldsymbol{a}_j\boldsymbol{a}_j^H) = \operatorname{rank}(\boldsymbol{A}) = r$.
	Therefore, we have:
	\begin{equation}
		\begin{aligned}
			&\frac{1}{2} \sum_{j=1}^{r} \left|s_j\right| + \frac{1}{2} \boldsymbol{x}^H(\sum_{j=1}^{r}\left|s_j\right| \boldsymbol{a}_j\boldsymbol{a}_j^H)^{\dagger}\boldsymbol{x} \\
			&= \frac{1}{2} \sum_{j=1}^{r} \left|s_j\right| + \frac{1}{2} \boldsymbol{x}^H (\boldsymbol{A}\boldsymbol{D}\boldsymbol{A}^H)^{\dagger}\boldsymbol{x} \\
			&= \frac{1}{2} \sum_{j=1}^{r} \left|s_j\right| + \frac{1}{2} \sum_{j=1}^{r} \frac{\left|s_j\right|^2}{\left|s_j\right|} = \sum_{j=1}^{r} \left|s_j\right|.
		\end{aligned}
	\end{equation}
	The second equality follows from the full rank decomposition and Moore-Penrose pseudoinverse. Specifically, for any Hermitian matrix $\boldsymbol{M}$ with full-rank decomposition $\boldsymbol{M} = \boldsymbol{B} \boldsymbol{B}^H$, we have $(\boldsymbol{M})^{\dagger} = \boldsymbol{B} (\boldsymbol{B}^H \boldsymbol{B})^{-1} (\boldsymbol{B}^H \boldsymbol{B})^{-1} \boldsymbol{B}^H$.
	Since this holds for any decomposition of $\boldsymbol{x}$, it follows that $\|\boldsymbol{x}\|_{\mathcal{A}}  \geq \operatorname{Limit} (\boldsymbol{x})$.
	
	Conversely, suppose for some $\boldsymbol{a}_j$, $\boldsymbol{x} = \sum_{j=1}^{r} s_j \boldsymbol{a}_j$. Then,
	\begin{equation}
		\begin{aligned}
			&\frac{1}{2} \sum_{j=1}^{r} d_j + \frac{1}{2} \boldsymbol{x}^H(\sum_{j=1}^{r}d_j \boldsymbol{a}_j\boldsymbol{a}_j^H)^{\dagger}\boldsymbol{x} \\
			&= \frac{1}{2} \sum_{j=1}^{r} d_j + \frac{1}{2} \sum_{j=1}^{r} \frac{\left|s_j\right|^2}{d_j} \geq \sum_{j=1}^{r} \left|s_j\right| \geq \|\boldsymbol{x}\|_{\mathcal{A}}.
		\end{aligned}
	\end{equation}
	This implies that $\operatorname{Limit} (\boldsymbol{x}) \geq \|\boldsymbol{x}\|_{\mathcal{A}}$, as the inequalities hold for any feasible $d_j$ and $\boldsymbol{a}_j$.
\end{proof}

Using Theorem \ref{thm:limit_general_atom} and Lemma \ref{lem:limit}, we present a limit-based formulation for the atomic norm with respect to a general atomic set $\mathcal{A}$:
\begin{equation}
	\|\boldsymbol{x}\|_{\mathcal{A}} = \inf_{\boldsymbol{\theta}}  \lim_{\beta \rightarrow 0} \mathcal{L}(\boldsymbol{\theta},{\beta}),
	\label{eq:inf_limit}
\end{equation}
where the objective function $\mathcal{L}(\boldsymbol{\theta},{\beta})$ is defined as
\begin{equation}
	\mathcal{L}(\boldsymbol{\theta},{\beta}) = \frac{1}{2} \sum_{j=1}^{r} d_j +\frac{1}{2} \boldsymbol{x}^H(\sum_{j=1}^{r} d_j \boldsymbol{a}_j\boldsymbol{a}_j^H + \beta \boldsymbol{I})^{-1} \boldsymbol{x},
\end{equation}
and {$\boldsymbol{\theta} = \{\boldsymbol{a}_j,d_j,j=1,\cdots,r\}$} with $d_j > 0$.

Theorem \ref{thm:limit_general_atom} demonstrates that this limit-based formulation applies not only to atomic sets of equispaced sampled sinusoids but also to other sets, such as the orthogonal basis $\left\{\boldsymbol{e}_i\right\}_{i=1}^n$ and randomly sampled sinusoids (i.e., the truncated atomic set mentioned earlier).

However, solving the problem in \eqref{eq:inf_limit} remains challenging. To our knowledge, there is no established solution for this limit-based form with a general atomic set $\mathcal{A}$.
For this, we highlight the following finding: the interchangeability of the limit and infimum operations in \eqref{eq:inf_limit}.
This result enables us to reformulate the problem as a series of minimization problems over $\beta$, laying the foundation for our algorithm.

\begin{thm}
	Let $\mathcal{A} = \left\{\boldsymbol{a}_k\right\}$ be an atomic set, where any subset of $r \leq n$ distinct atoms is assumed to be linearly independent. Under this condition, the following equalities hold:
	\begin{equation}
		\|\boldsymbol{x}\|_{\mathcal{A}} = \inf_{\boldsymbol{\theta}}  \lim_{\beta \rightarrow 0} \mathcal{L}(\boldsymbol{\theta},{\beta}) = \lim_{\beta \rightarrow 0} \inf_{\boldsymbol{\theta}} \mathcal{L}(\boldsymbol{\theta},{\beta}).
		\label{eq:lim_inf}
	\end{equation}
\end{thm}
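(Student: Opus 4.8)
The plan is as follows. The first equality in \eqref{eq:lim_inf} is exactly \eqref{eq:inf_limit}, which was already obtained from Theorem~\ref{thm:limit_general_atom} and Lemma~\ref{lem:limit}; so only the interchange $\inf_{\boldsymbol{\theta}}\lim_{\beta\to0}\mathcal{L}=\lim_{\beta\to0}\inf_{\boldsymbol{\theta}}\mathcal{L}$ remains. I would first record a monotonicity fact: for a fixed $\boldsymbol{\theta}$ the map $\beta\mapsto\mathcal{L}(\boldsymbol{\theta},\beta)$ is non-increasing on $(0,\infty)$, since $0<\beta_1<\beta_2$ gives $\sum_j d_j\boldsymbol{a}_j\boldsymbol{a}_j^H+\beta_1\boldsymbol{I}\preceq\sum_j d_j\boldsymbol{a}_j\boldsymbol{a}_j^H+\beta_2\boldsymbol{I}$, and matrix inversion reverses the positive-semidefinite order on positive definite matrices, so the quadratic term of $\mathcal{L}$ is non-increasing in $\beta$ while $\tfrac12\sum_j d_j$ is constant. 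Consequently $\lim_{\beta\to0^+}\mathcal{L}(\boldsymbol{\theta},\beta)=\sup_{\beta>0}\mathcal{L}(\boldsymbol{\theta},\beta)$, and, taking a pointwise infimum, $g(\beta):=\inf_{\boldsymbol{\theta}}\mathcal{L}(\boldsymbol{\theta},\beta)$ is itself non-increasing, so $\lim_{\beta\to0^+}g(\beta)$ exists.

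The easy inequality $\lim_{\beta\to0}g(\beta)\le\|\boldsymbol{x}\|_{\mathcal{A}}$ is then just weak duality: for every $\boldsymbol{\theta}$ and $\beta>0$, $g(\beta)\le\mathcal{L}(\boldsymbol{\theta},\beta)\le\sup_{\beta'>0}\mathcal{L}(\boldsymbol{\theta},\beta')=\lim_{\beta'\to0}\mathcal{L}(\boldsymbol{\theta},\beta')$, so $\sup_{\beta>0}g(\beta)\le\lim_{\beta'\to0}\mathcal{L}(\boldsymbol{\theta},\beta')$ for all $\boldsymbol{\theta}$, and taking the infimum over $\boldsymbol{\theta}$ and using \eqref{eq:inf_limit} yields $\lim_{\beta\to0}g(\beta)=\sup_{\beta>0}g(\beta)\le\|\boldsymbol{x}\|_{\mathcal{A}}$.

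The reverse inequality is the crux, and I would establish it with a dual certificate rather than a minimax theorem (Sion's theorem does not apply, as $\mathcal{L}$ is convex --- not concave --- in $\beta$, and the inner supremum sits at the boundary $\beta\to0$). Fix any $\boldsymbol{q}\in\mathbb{C}^n$ with $|\boldsymbol{a}^H\boldsymbol{q}|\le1$ for every $\boldsymbol{a}\in\mathcal{A}$. For arbitrary $\boldsymbol{\theta}$ and $\beta>0$, writing $\boldsymbol{C}=\sum_j d_j\boldsymbol{a}_j\boldsymbol{a}_j^H+\beta\boldsymbol{I}\succ 0$, expansion of $0\le\|\boldsymbol{C}^{-1/2}\boldsymbol{x}-\boldsymbol{C}^{1/2}\boldsymbol{q}\|^2$ gives $\boldsymbol{x}^H\boldsymbol{C}^{-1}\boldsymbol{x}\ge2\operatorname{Re}(\boldsymbol{q}^H\boldsymbol{x})-\sum_j d_j|\boldsymbol{a}_j^H\boldsymbol{q}|^2-\beta\|\boldsymbol{q}\|^2$; substituting into $\mathcal{L}$ and using $|\boldsymbol{a}_j^H\boldsymbol{q}|^2\le1$ with $d_j>0$ (so the residual $\tfrac12\sum_j d_j(1-|\boldsymbol{a}_j^H\boldsymbol{q}|^2)$ is nonnegative),
\begin{equation}
\mathcal{L}(\boldsymbol{\theta},\beta)\ \ge\ \operatorname{Re}(\boldsymbol{q}^H\boldsymbol{x})-\frac{\beta}{2}\|\boldsymbol{q}\|^2 .
\end{equation}
This bound is uniform in $\boldsymbol{\theta}$, so $g(\beta)\ge\operatorname{Re}(\boldsymbol{q}^H\boldsymbol{x})-\tfrac{\beta}{2}\|\boldsymbol{q}\|^2$; letting $\beta\to0$ gives $\lim_{\beta\to0}g(\beta)\ge\operatorname{Re}(\boldsymbol{q}^H\boldsymbol{x})$, and taking the supremum over all such $\boldsymbol{q}$ and invoking the dual characterization $\|\boldsymbol{x}\|_{\mathcal{A}}=\sup\{\operatorname{Re}(\boldsymbol{q}^H\boldsymbol{x}):\sup_{\boldsymbol{a}\in\mathcal{A}}|\boldsymbol{a}^H\boldsymbol{q}|\le1\}$ (valid for the compact atomic sets considered here) gives $\lim_{\beta\to0}g(\beta)\ge\|\boldsymbol{x}\|_{\mathcal{A}}$. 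Combining with the previous bound, $\lim_{\beta\to0}\inf_{\boldsymbol{\theta}}\mathcal{L}(\boldsymbol{\theta},\beta)=\|\boldsymbol{x}\|_{\mathcal{A}}$, which is the asserted second equality.

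The main obstacle is the $\ge$ direction: we need a lower bound on $\mathcal{L}(\boldsymbol{\theta},\beta)$ that is uniform in $\boldsymbol{\theta}$ --- so it survives the inner infimum --- yet asymptotically tight as $\beta\to0$. A naive compactness argument on a minimizing sequence $\boldsymbol{\theta}_\beta$ is problematic because the number of atoms $r$ and the weights $d_j$ need not be bounded; the completing-the-square estimate against a dual certificate circumvents this, with the nonnegative residual $\tfrac12\sum_j d_j(1-|\boldsymbol{a}_j^H\boldsymbol{q}|^2)$ precisely decoupling the bound from $\boldsymbol{\theta}$. Everything else --- the monotonicity in $\beta$ and the weak-duality direction --- is routine.
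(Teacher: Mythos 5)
Your proof is correct but follows a genuinely different route from the paper's. The paper proves only the nontrivial direction $\varliminf_{\beta\to0}\inf_{\boldsymbol{\theta}}\mathcal{L}\ge\|\boldsymbol{x}\|_{\mathcal{A}}$ by contradiction: assuming the $\varliminf$ falls short by $3l$, it extracts near-minimizers $\boldsymbol{\theta}_{\beta_n}^{*}$ with $\mathcal{L}(\boldsymbol{\theta}_{\beta_n}^{*},\beta_n)\le\|\boldsymbol{x}\|_{\mathcal{A}}-2l$ and plays these against the pointwise statement $\lim_{\beta\to0}\mathcal{L}(\boldsymbol{\theta},\beta)\ge\|\boldsymbol{x}\|_{\mathcal{A}}$ from Theorem~\ref{thm:limit_general_atom}. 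As you correctly anticipated in your closing remark, that pointwise statement only yields an index $n_2$ depending on $\boldsymbol{\theta}$, whereas the contradiction needs it to hold at $\boldsymbol{\theta}=\boldsymbol{\theta}_{\beta_n}^{*}$, which varies with $n$; the paper's argument is therefore silently assuming a uniformity in $\boldsymbol{\theta}$ that it never establishes. Your dual-certificate bound $\mathcal{L}(\boldsymbol{\theta},\beta)\ge\operatorname{Re}(\boldsymbol{q}^H\boldsymbol{x})-\tfrac{\beta}{2}\|\boldsymbol{q}\|^2$, obtained by completing the square against any $\boldsymbol{q}$ with $\sup_{\boldsymbol{a}\in\mathcal{A}}|\boldsymbol{a}^H\boldsymbol{q}|\le1$, is uniform in $\boldsymbol{\theta}$ by construction and so survives the inner infimum; this is exactly the missing ingredient, and your monotonicity observation cleanly supplies the existence of the limit and the easy $\le$ direction. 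The price you pay is the biduality $\|\boldsymbol{x}\|_{\mathcal{A}}=\sup\{\operatorname{Re}(\boldsymbol{q}^H\boldsymbol{x}):\sup_{\boldsymbol{a}}|\boldsymbol{a}^H\boldsymbol{q}|\le1\}$, which requires the gauge to be closed (e.g., $\mathcal{A}$ compact, not in the hypotheses of the theorem but true for every atomic set the paper actually uses); the paper's route, had it worked, would have needed no such assumption since it leans only on Theorem~\ref{thm:limit_general_atom}. On balance your argument is the more rigorous of the two, and it would be worth stating the closedness assumption explicitly if it were adopted.
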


\begin{proof}
	By Theorem \ref{thm:limit_general_atom}, we have
	\begin{equation}
		\|\boldsymbol{x}\|_{\mathcal{A}} = \inf_{\boldsymbol{\theta}} \lim_{\beta \rightarrow 0} \mathcal{L}(\boldsymbol{\theta},{\beta}) = \lim_{\beta \rightarrow 0} \mathcal{L}(\boldsymbol{\theta}^{*},{\beta}) \geq \varliminf_{\beta \rightarrow 0} \inf_{\boldsymbol{\theta}} \mathcal{L}(\boldsymbol{\theta},{\beta}),
	\end{equation}
	where $	\boldsymbol{\theta}^{*} = \arg \inf_{\boldsymbol{\theta}} \lim_{\beta \rightarrow 0} \mathcal{L}(\boldsymbol{\theta},{\beta})$.
	We assert that the inequality above is actually an equality, indicating that the limit exists. 
	To prove this by contradiction, assume
	\begin{equation}
		\varliminf_{\beta \rightarrow 0} \inf_{\boldsymbol{\theta}} \mathcal{L}(\boldsymbol{\theta},{\beta}) = \|\boldsymbol{x}\|_{\mathcal{A}} - 3l, l > 0.
	\end{equation}
	This implies that there exists a sequence $\{\beta_n\}$ and an integer $n_1$ such that for all $n \geq n_1$
	\begin{equation}
		\mathcal{L}(\boldsymbol{\theta}_{\beta_n}^{*},{\beta_n}) \leq \|\boldsymbol{x}\|_{\mathcal{A}} - 2l,
		\label{eq:assume_ineq}
	\end{equation}
	where $\boldsymbol{\theta}_{\beta_n}^{*} = \arg \inf_{\boldsymbol{\theta}} \mathcal{L}(\boldsymbol{\theta},{\beta_n})$.
	By Theorem \ref{thm:limit_general_atom}, we have
	\begin{equation}
		\lim_{\beta \rightarrow 0} \mathcal{L}(\boldsymbol{\theta},{\beta}) \geq \|\boldsymbol{x}\|_{\mathcal{A}}, \forall \boldsymbol{\theta}.
	\end{equation}
	Thus, for any $\boldsymbol{\theta}$, there exists an integer $n_2$ such that for all $n \geq n_2$,
	\begin{equation}
		\mathcal{L}(\boldsymbol{\theta},{\beta_n}) \geq \|\boldsymbol{x}\|_{\mathcal{A}} - l.
	\end{equation}
	This contradicts the inequality in \eqref{eq:assume_ineq}, thus proving our initial claim that \eqref{eq:lim_inf} hold.
\end{proof}

\begin{figure*}[t]
	\centering
	\subfloat[]{\includegraphics[width=2in]{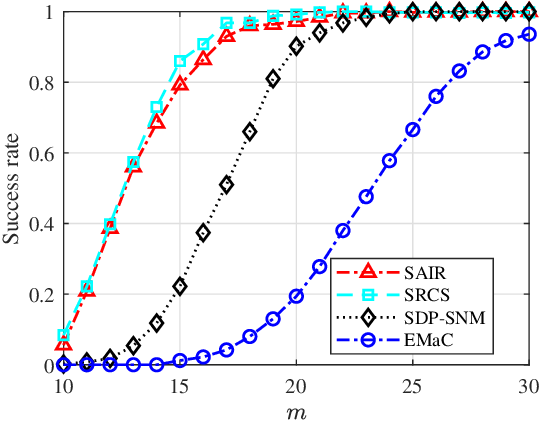}%
		\label{fig:sr_m}}
	\hfil
	\subfloat[]{\includegraphics[width=2in]{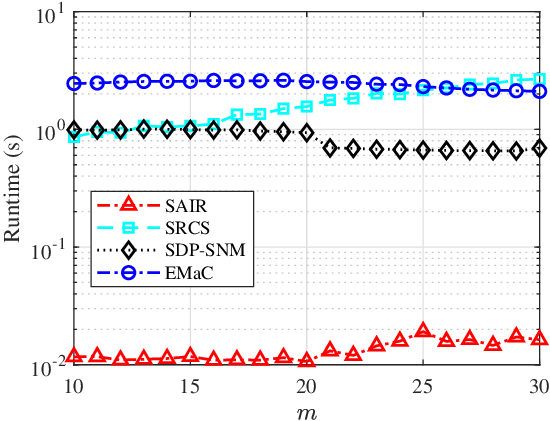}%
		\label{fig:runtime_m}}
	\caption{Simulation results of respective algorithms when $K=5$. (a) Success rates vs. $m$. (b) Runtimes vs. $m$.}
	\label{fig:sr_runtime_m}
\end{figure*}

\section{Connection to Prior Arts} \label{sec:bayesian}

In this section, we will show that the new atomic norm formulation reveals the relationship between ANM and Bayesian LSE approaches, bridging the gap between these two methodologies.
Previous works \cite{sbl-de, vbi-17, superfast-lse} have adopted a Bayesian perspective for LSE, formulating the objective of maximum a posteriori (MAP) as (see \cite{superfast-lse} for further detail):
\begin{equation}
	\mathcal{L}(\boldsymbol{z},\zeta,\sigma,\boldsymbol{f},\boldsymbol{\gamma}) = \ln |\mathbf{C}|+\boldsymbol{y}^{\mathrm{H}} \mathbf{C}^{-1} \boldsymbol{y} + \operatorname{const},
	\label{eq:obj_bayesian}
\end{equation}
where the covariance matrix is:
$$
\boldsymbol{C} = \sum_{j=1}^{n} \gamma_j \boldsymbol{\Phi} \boldsymbol{a}(f_j) (\boldsymbol{\Phi}\boldsymbol{a}(f_j))^H + \sigma^2 \boldsymbol{I}.
$$
This objective parallels the new atomic norm formulation in  \eqref{eq:an_limit}, where the noise variance $\sigma^2$ provides a statistical interpretation for the parameter $\beta$.
Additionally, from the perspective of the MM algorithm \cite{yang_ram}, and given that the function $f(\boldsymbol{M}) = \ln |\boldsymbol{M} + \sigma^2 \boldsymbol{I}|$ has a tangent plane at the origin given by
$$
\ln |\sigma^2 \boldsymbol{I}| + \frac{1}{\sigma^2} \operatorname{tr} (\boldsymbol{M}),
$$
the atomic norm in \eqref{eq:an_limit} can be interpreted as the first-order Taylor expansion of the objective’s first term in \eqref{eq:obj_bayesian} at the origin, as the noise parameter $\sigma \rightarrow 0$.

Furthermore, the objective \eqref{eq:obj_bayesian} employs the log-det sparse metric, which serves as a non-convex relaxation of the atomic $\ell_0$-norm \cite{yang_ram}:
\begin{equation}
	\|\boldsymbol{x}\|_{\mathcal{A},0}=\inf _{\substack{f_k \in [0,1), \\s_k \in \mathbb{C}, {r} }}\left\{{r}: \boldsymbol{x}=\sum_{k=1}^{{r}} s_k \boldsymbol{a}\left(f_k\right), \boldsymbol{a}\left(f_k\right) \in \mathcal{A} \right\} .
	\label{eq:an_0}
\end{equation}
In contrast, the atomic norm in \eqref{eq:an_general} is a convex relaxation.

\section{SAIR Algorithm} \label{sec:algorithm}

In this section, we introduce the Sequential Atom Identification and Refinement (SAIR) algorithm based on the limit-based formulation of the atomic norm.
Leveraging its connection to Bayesian approaches, we adopt an off-grid strategy similar to that of \cite{superfast-lse}.
The objective function is defined as:
\begin{equation}
	\mathcal{L}({\boldsymbol{f},\boldsymbol{d}},{\beta_k}) = \frac{1}{2} \sum_{j=1}^{r} d_j +\frac{1}{2} \boldsymbol{x}^H\boldsymbol{C}^{-1} \boldsymbol{x},
\end{equation}
where $\boldsymbol{f} = [f_1, \dots, f_r]$ represents the frequencies in the current recovery dictionary, and $\boldsymbol{d} = [d_1, \dots, d_r]$ denotes the magnitudes.
The matrix $\boldsymbol{C}$ is given by:
$$
\boldsymbol{C} = \sum_{j=1}^{r} d_j \boldsymbol{a}(f_j) \boldsymbol{a}(f_j)^H + \beta_k \boldsymbol{I},
$$
where $\{\beta_k\}$ is a sequence approaching zero.

Now, we explore how the objective function changes when a new atom is added at frequency $f$ from a finite discrete set $\boldsymbol{\Omega} = \{k/(\gamma n): k = 1,\cdots,(\gamma n -1)\}$, where $\gamma$ is the oversampling factor.
Using the Woodbury's matrix inversion identity \cite{tipping_fast}, the change in the objective is given by:
\begin{equation}
	\Delta \mathcal{L}(f,d) = \frac{1}{2} (d - \frac{1}{d^{-1}+\boldsymbol{a}(f)^H\boldsymbol{C}^{-1}\boldsymbol{a}(f)} |\boldsymbol{a}(f)^H\boldsymbol{C}^{-1}\boldsymbol{x}|^2).
\end{equation}
For a fixed frequency $f$, the optimal magnitude $\hat{d}(f)$ is obtained by minimizing $\Delta \mathcal{L}(f,d)$.
If $\Delta \mathcal{L}(\bar{f}_j,\hat{d}(\bar{f}_j)) \geq 0$ for all candidate frequencies $\bar{f}_j \in \boldsymbol{\Omega}$, the algorithm terminates.
Otherwise, the next atom is selected by minimizing:
\begin{equation}
	\tilde{f} = \arg \min_{{f\in \boldsymbol{\Omega}}} \Delta \mathcal{L}(f,\hat{d}(f)).
\end{equation}
To mitigate the effects of grid mismatch, we employ the damped Broyden-Fletcher-Goldfarb-Shanno (BFGS) algorithm \cite{numerical_opt} to refine frequencies in the dictionary over a continuous domain.

The algorithm starts with an empty dictionary and initializes $\boldsymbol{C} = \beta_0 \boldsymbol{I}$, where $\beta_0 = \frac{1}{n}\|\boldsymbol{x}\|$.
The sequence is then updated as $\beta_{k+1} = 0.2 \beta_k$. 
The oversampling factor, $\gamma$, is set to $8$.
The SAIR algorithm can be easily extended to compressive scenarios, where it corresponds to considering the truncated atomic set $\mathcal{A} := \left\{\boldsymbol{\Phi} \boldsymbol{a}\left(f_k\right), f_k \in[0,1)\right\}$.
To reduce computational complexity, derivative calculations and matrix inversion of $\boldsymbol{C}$ can be accelerated following \cite{superfast-lse}.
Although a convergence analysis similar to that in \cite{superfast-lse} could be applied to the SAIR algorithm, it lies beyond the scope of this letter. In all tested cases, the algorithm converged consistently, demonstrating strong stability.

\section{Simulation Results} \label{sec:simulation}

Simulations are performed to illustrate the performance of our proposed algorithm for DOA estimation in a noiseless scenario.
We consider a mixture of $K$ sinusoids of length $n=64$, where the frequencies and coefficients are randomly generated, ensuring the minimum frequency separation $\Delta f_{\operatorname{min}} = \min_{k\neq j} |f_k - f_j| \geq 2/n$.
Two metrics are used: success rate and runtime.
The success rate is the proportion of trials in which the signal is successfully recovered.
A trial is considered successful if the normalized mean squared error (NMSE) satisfies $\operatorname{NMSE} \leq 10^{-4}$, where NMSE is defined as
$$
\operatorname{NMSE} := \frac{\|\hat{\boldsymbol{x}}- \boldsymbol{x}\|^2}{\|\boldsymbol{x}\|^2}.
$$
We perform 500 trials for each scenario.
We compare SAIR with SDP-ANM \cite{anm}, EMaC \cite{emac} and SRCS \cite{srcs}.

In Fig. \ref{fig:sr_m}, we illustrate the success rates as a function of the number of measurements $m$, where SAIR and SRCS achieve the highest rates, outperforming SDP-ANM and EMaC.
As $m$ decreases, SAIR’s advantage over SDP-ANM becomes more evident.

Fig. \ref{fig:runtime_m} presents the runtimes across various values of $m$, showing that SAIR achieves significantly lower runtimes, improving by over two orders of magnitude compared to other methods.
This highlights the superior computational efficiency of the SAIR algorithm.

\section{Conclusion} \label{sec:conclusion}

This letter presents an alternative formulation of the atomic norm's SDP characterization using a limit-based approach.
This new formulation leverages the intrinsic structure of the atomic norm and enables broader applicability to general atomic sets, opening new avenues for future research.
By exploiting the connection between this new formulation and Bayesian approaches, we propose a fast SAIR algorithm for ANM, which operates in an off-grid manner to solve sequential subproblems. 
Simulation results validate its superior estimation accuracy and computational efficiency compared to existing methods in DOA estimation.

\bibliographystyle{IEEEtran}
\bibliography{IEEEabrv,mybib}

\begin{thebibliography}{10}
\providecommand{\url}[1]{#1}
\csname url@samestyle\endcsname
\providecommand{\newblock}{\relax}
\providecommand{\bibinfo}[2]{#2}
\providecommand{\BIBentrySTDinterwordspacing}{\spaceskip=0pt\relax}
\providecommand{\BIBentryALTinterwordstretchfactor}{4}
\providecommand{\BIBentryALTinterwordspacing}{\spaceskip=\fontdimen2\font plus
\BIBentryALTinterwordstretchfactor\fontdimen3\font minus
  \fontdimen4\font\relax}
\providecommand{\BIBforeignlanguage}[2]{{%
\expandafter\ifx\csname l@#1\endcsname\relax
\typeout{** WARNING: IEEEtran.bst: No hyphenation pattern has been}%
\typeout{** loaded for the language `#1'. Using the pattern for}%
\typeout{** the default language instead.}%
\else
\language=\csname l@#1\endcsname
\fi
#2}}
\providecommand{\BIBdecl}{\relax}
\BIBdecl

\bibitem{l1-svd}
D.~Malioutov, M.~Cetin, and A.~S. Willsky, ``A sparse signal reconstruction
  perspective for source localization with sensor arrays,'' \emph{IEEE Trans.
  Signal Process.}, vol.~53, no.~8, pp. 3010--3022, 2005.

\bibitem{ce_app}
Y.~Barbotin, A.~Hormati, S.~Rangan, and M.~Vetterli, ``Estimation of sparse
  {MIMO} channels with common support,'' \emph{IEEE Trans. Commun.}, vol.~60,
  no.~12, pp. 3705--3716, 2012.

\bibitem{modal}
S.~Li, D.~Yang, G.~Tang, and M.~B. Wakin, ``Atomic norm minimization for modal
  analysis from random and compressed samples,'' \emph{IEEE Trans. Signal
  Process.}, vol.~66, no.~7, pp. 1817--1831, 2018.

\bibitem{music}
R.~Schmidt, ``Multiple emitter location and signal parameter estimation,''
  \emph{IEEE Trans. Antennas Propag.}, vol.~34, no.~3, pp. 276--280, 1986.

\bibitem{cs}
E.~J. Cand{\`e}s and M.~B. Wakin, ``An introduction to compressive sampling,''
  \emph{IEEE Signal Process. Mag.}, vol.~25, no.~2, pp. 21--30, 2008.

\bibitem{chi-mismatch}
Y.~Chi, L.~L. Scharf, A.~Pezeshki, and R.~Calderbank, ``Sensitivity to basis
  mismatch in compressed sensing,'' \emph{IEEE Trans. Signal Process.},
  vol.~59, no.~5, pp. 2182--2195, 2011.

\bibitem{anm}
G.~Tang, B.~N. Bhaskar, P.~Shah, and B.~Recht, ``Compressed sensing off the
  grid,'' \emph{IEEE Trans. Inf. Theory}, vol.~59, no.~11, pp. 7465--7490,
  2013.

\bibitem{emac}
Y.~Chen and Y.~Chi, ``Spectral compressed sensing via structured matrix
  completion,'' in \emph{Proc. Int. Conf. Mach. Learn. (ICML)}, 2013, pp.
  414--422.

\bibitem{srcs}
J.~Fang, J.~Li, Y.~Shen, and H.~Li, ``Super-resolution compressed sensing: An
  iterative reweighted algorithm for joint parameter learning and sparse signal
  recovery,'' \emph{IEEE Signal Process. Lett.}, vol.~21, no.~6, pp. 761--765,
  2014.

\bibitem{nomp}
B.~Mamandipoor, D.~Ramasamy, and U.~Madhow, ``Newtonized orthogonal matching
  pursuit: Frequency estimation over the continuum,'' \emph{IEEE Trans. Signal
  Process.}, vol.~64, no.~19, pp. 5066--5081, 2016.

\bibitem{sbl-de}
T.~L. Hansen, M.~A. Badiu, B.~H. Fleury, and A.~Snepvangers, ``A sparse
  bayesian learning algorithm with dictionary parameter estimation,'' in
  \emph{Proc. IEEE Sensor Array Multichannel Signal Process. Workshop (SAM)},
  2014, pp. 385--388.

\bibitem{vbi-17}
M.-A. Badiu, T.~L. Hansen, and B.~H. Fleury, ``Variational bayesian inference
  of line spectra,'' \emph{IEEE Trans. Signal Process.}, vol.~65, no.~9, pp.
  2247--2261, 2017.

\bibitem{superfast-lse}
T.~L. Hansen, B.~H. Fleury, and B.~D. Rao, ``Superfast line spectral
  estimation,'' \emph{IEEE Trans. Signal Process.}, vol.~66, no.~10, pp.
  2511--2526, 2018.

\bibitem{2012convex}
V.~Chandrasekaran, B.~Recht, P.~A. Parrilo, and A.~S. Willsky, ``The convex
  geometry of linear inverse problems,'' \emph{Found. Comput. Math.}, vol.~12,
  no.~6, pp. 805--849, 2012.

\bibitem{boyd2004convex}
S.~Boyd and L.~Vandenberghe, \emph{Convex {Optimization}}.\hskip 1em plus 0.5em
  minus 0.4em\relax Cambridge, {U.K.}: Cambridge {Univ.} {Press}, 2004.

\bibitem{toeplitz1911theorie}
O.~Toeplitz, ``Zur theorie der quadratischen und bilinearen formen von
  unendlichvielen ver{\"a}nderlichen,'' \emph{Math. Ann.}, vol.~70, no.~3, pp.
  351--376, 1911.

\bibitem{yang_ram}
Z.~Yang and L.~Xie, ``Enhancing sparsity and resolution via reweighted atomic
  norm minimization,'' \emph{IEEE Trans. Signal Process.}, vol.~64, no.~4, pp.
  995--1006, 2015.

\bibitem{tipping_fast}
M.~E. Tipping and A.~C. Faul, ``Fast marginal likelihood maximisation for
  sparse bayesian models,'' in \emph{Int. Workshop Artif. Intell. Stat.}\hskip
  1em plus 0.5em minus 0.4em\relax PMLR, 2003, pp. 276--283.

\bibitem{numerical_opt}
J.~Nocedal and S.~J. Wright, \emph{Numerical optimization}.\hskip 1em plus
  0.5em minus 0.4em\relax Springer, 1999.

\end{thebibliography}

\end{document}